\title[Existence of degree 1 minimizers]{Existence of minimal maps of degree one in $W^{\frac1p,p}(\S^1,\S^1)$ for $p \in [p',2]$, where $p' \approx 1.13924$}
\date{\today}
\author{Tomasz Kostrzewa}
\address[Tomasz Kostrzewa]{
Sorbonne Faculty of Science and Engineering,
Sorbonne University,
4 Pl. Jussieu,
75005, Paris, France
}
\email{tomasz.kostrzewa@etu.sorbonne-universite.fr}
\author{Katarzyna Mazowiecka}
\address[Katarzyna Mazowiecka]{
Institute of Mathematics, %
University of Warsaw,
Banacha 2,
02-097 Warszawa, Poland}
\email{k.mazowiecka@mimuw.edu.pl}
\definecolor{indigo}{rgb}{0.29, 0.0, 0.51}
\newcommand{\dif}{\,\mathrm{d}}
\def\S{{\mathbb S}}
\newtheorem{theorem}{Theorem}
\newtheorem{lemma}[theorem]{Lemma}
\newcommand{\dd}{\,\mathrm{d}}
\newcommand{\dx}{\dd x}
\newcommand{\dy}{\dd y}
\newcommand{\R}{\mathbb{R}}
\newcommand{\Z}{\mathbb{Z}}
\newcommand{\brac}[1]{\left (#1 \right )}
\newcommand{\abs}[1]{\left |#1 \right |}
\newcommand{\barint}{
\rule[.036in]{.12in}{.009in}\kern-.16in \displaystyle\int }
\newcommand{\barcal}{\mbox{$ \rule[.036in]{.11in}{.007in}\kern-.128in\int $}}
\def\mvint_#1{\mathchoice
          {\mathop{\vrule width 6pt height 3 pt depth -2.5pt
                  \kern -8pt \intop}\nolimits_{\kern -3pt #1}}%
%%%% P.S., 01/03/2001
% old definition had ...\nolimits_{#1}}
% \kern -3pt makes nicer distances between the integral sign
% and the domain of integration
%%%%
          {\mathop{\vrule width 5pt height 3 pt depth -2.6pt
                  \kern -6pt \intop}\nolimits_{#1}}%
          {\mathop{\vrule width 5pt height 3 pt depth -2.6pt
                  \kern -6pt \intop}\nolimits_{#1}}%
          {\mathop{\vrule width 5pt height 3 pt depth -2.6pt
                  \kern -6pt \intop}\nolimits_{#1}}}
\numberwithin{theorem}{section} \numberwithin{equation}{section}
\newcommand{\aleq}{\precsim}
\let\latexchi\chi
\renewcommand\chi{\@ifnextchar_\sub@chi\latexchi}
\newcommand{\sub@chi}[2]{% #1 is _, #2 is the subscript
  \@ifnextchar^{\subsup@chi{#2}}{\latexchi^{}_{#2}}%
}
\newcommand{\subsup@chi}[3]{% #1 is the subscript, #2 is ^, #3 is the superscript
  \latexchi_{#1}^{#3}%
}
\begin{document}

\begin{abstract}
In this note, we show how the results of \cite{mazowiecka2020minimal}, combined with those of \cite{BBM}, imply the existence of minimal maps of degree one in \( W^{\frac{1}{p},p}(\mathbb{S}^1,\mathbb{S}^1) \) for \( p \in [p', 2] \), where \( p' \approx 1.13924 \). This provides an affirmative answer in this range to a question posed by Mironescu \cite{M07} and Brezis--Mironescu \cite{BM-book}. In order to do so, we complement the results of \cite{mazowiecka2020minimal} by extending them to the case \( n = 1 \) and \( 1 < p < 2 \), which had been excluded there for technical reasons.
\end{abstract}

\subjclass[2010]{58E20, 35R11}
\maketitle
%\tableofcontents
\sloppy

\section{Introduction}
In \cite{M07}, Mironescu asked whether, for \( p > 1 \), minimizers of the energy
\[
 E_{\frac1p,p}(u,\mathbb{S}^1) \coloneqq \int_{\mathbb{S}^1} \int_{\mathbb{S}^1} \frac{|u(x) - u(y)|^p}{|x - y|^2} \, \mathrm{d}x \, \mathrm{d}y
\]
exist among maps \( u \in W^{\frac{1}{p},p}(\mathbb{S}^1,\mathbb{S}^1) \) of degree one; see also \cite[Open Problem 24]{BM-book}. The answer is affirmative when \( p = 2 \):
\begin{equation}\label{eq:minimalwenergyW122}
 \inf_{\substack{u \in W^{\frac12,2}(\mathbb{S}^1,\mathbb{S}^1)\\ \deg u = 1}} E_{\frac12,2}(u) = 4\pi^2,
\end{equation}
and the infimum is attained by Möbius transformations (see, e.g., \cite[Section 2.3]{M07}).

Later, in \cite{Mironescu}, Mironescu proved that there exists \( \varepsilon > 0 \) such that minimizers of \( E_{\frac{1}{p},p} \) exist for maps of degree one in \( W^{\frac{1}{p},p}(\mathbb{S}^1,\mathbb{S}^1) \) for all \( p \in (2 - \varepsilon, 2] \). This result was further improved in \cite{Mazowiecka_Schikorra_2024}, where the authors showed that such minimizers exist for \( p \in (2 - \varepsilon, 2 + \varepsilon) \) for a computable, small \( \varepsilon > 0 \).

The question of existence is related to the sharp constant in the celebrated estimate by Bourgain--Brezis--Mironescu \cite[Theorem 0.6]{BBM}, which states that for any \( u \in W^{\frac{n}{p},p}(\mathbb{S}^n,\mathbb{S}^n) \),
\begin{equation}\label{eq:BBM}
 |\deg u| \leq C_{n,p} [u]_{W^{\frac{n}{p},p}}^p.
\end{equation}
The optimal constant is known only in the case \( n = 1 \), \( p = 2 \), where \( C_{1,2} = \frac{1}{4\pi^2} \); see \cite[Point 7, p. 1090]{Mironescu}.

In this short note, we show that combining the results of \cite{mazowiecka2020minimal} with \eqref{eq:BBM}, along with estimates for the energy \( [\mathrm{Id}]_{W^{\frac{1}{p},p}(\mathbb{S}^1,\mathbb{S}^1)} \), leads to an existence result for \( p \in [p', 2] \), where \( p' \approx 1.13924 \).

\begin{theorem}\label{th:main}
Assume \( p \in [p', 2] \), where \( p' \approx 1.13924 \). Then
\[
\inf_{\substack{u \in W^{\frac{1}{p},p}(\mathbb{S}^1,\mathbb{S}^1)\\ \deg u = 1}} \int_{\mathbb{S}^1} \int_{\mathbb{S}^1} \frac{|u(x) - u(y)|^p}{|x - y|^2} \, \mathrm{d}x \, \mathrm{d}y
\]
is attained.
\end{theorem}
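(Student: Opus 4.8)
The plan is to follow the classical strategy of the direct method, where the only non-trivial point is that the degree-one constraint is not closed under weak convergence in $W^{\frac1p,p}$: a minimizing sequence could lose a nontrivial "bubble" concentrating at a point, so that the weak limit has a strictly smaller energy but the wrong degree. To rule this out, I would combine a lower-semicontinuity/concentration-compactness dichotomy with a strict subadditivity inequality between the minimal energy at degree one and at lower degrees — and here is where the numerics enter. Writing $m(d) := \inf\{E_{\frac1p,p}(u) : \deg u = d\}$, a standard argument (cf. \cite{Mironescu, mazowiecka2020minimal}) shows that a minimizing sequence for $m(1)$ either converges (up to Möbius reparametrization and a subsequence) to a degree-one minimizer — in which case we are done — or it splits off mass, forcing
\[
 m(1) \geq m(0) + (\text{energy of one concentrated bubble of degree } 1).
\]
Since $m(0)=0$ (constants have zero energy and are admissible for degree zero), the no-concentration condition reduces to the strict inequality that the energy cannot be entirely "spent at scale zero," i.e. that $m(1)$ is strictly below the infimal energy of maps that degenerate. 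The precise form of this in \cite{mazowiecka2020minimal}, extended to $n=1$, $1<p<2$ as announced in the abstract, is a comparison of the type
\[
 m(1) < [\mathrm{Id}]_{W^{\frac1p,p}(\mathbb{S}^1,\mathbb{S}^1)}^p,
\]
or more precisely a gap condition phrased through the BBM constant $C_{1,p}$ from \eqref{eq:BBM}.

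Concretely, the steps I would carry out are: \emph{(i)} Record the upper bound $m(1) \le [\mathrm{Id}]_{W^{\frac1p,p}}^p$ by testing with the identity map (degree one), and compute or estimate $[\mathrm{Id}]_{W^{\frac1p,p}(\mathbb{S}^1,\mathbb{S}^1)}^p = \int_{\mathbb{S}^1}\int_{\mathbb{S}^1}\frac{|x-y|^p}{|x-y|^2}\,dx\,dy$ (identifying $\mathbb{S}^1$ points via the chord or arc distance as appropriate) as an explicit function of $p$. \emph{(ii)} Invoke \eqref{eq:BBM} to get the lower bound on the energy of any degree-one map, hence a lower bound for the cost of a concentrated unit bubble: any bubble carrying degree $1$ has energy at least $1/C_{1,p}$. \emph{(iii)} Apply the (extended) result of \cite{mazowiecka2020minimal}: minimizers exist provided the strict inequality
\[
 [\mathrm{Id}]_{W^{\frac1p,p}(\mathbb{S}^1,\mathbb{S}^1)}^p < \frac{1}{C_{1,p}}
\]
(or the analogous explicit gap condition used there) holds — this is exactly the statement whose extension to $n=1$, $1<p<2$ the abstract promises. \emph{(iv)} Verify numerically that this inequality holds precisely for $p \in [p', 2]$ with $p' \approx 1.13924$; at $p=2$ it is the equality case \eqref{eq:minimalwenergyW122} with the sharp constant $C_{1,2}=\frac{1}{4\pi^2}$, and the threshold $p'$ is where the two explicit curves in $p$ cross.

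The main obstacle is step \emph{(iii)}: one must actually extend the existence machinery of \cite{mazowiecka2020minimal} — which was stated for $n \ge 2$ (or $p \ge 2$) — to the fractional one-dimensional subcritical regime $1 < p < 2$, $sp = 1$. This requires re-examining the Gagliardo seminorm decomposition, the behavior under rescaling $\mathbb{S}^1 \supset B_r \to B_1$, and the fact that the degree is still well-defined and continuous on $W^{\frac1p,p}(\mathbb{S}^1,\mathbb{S}^1)$ in this range (a consequence of \cite{BBM}), so that the dichotomy argument and the associated "bubbling" energy accounting go through verbatim. The remaining steps are either elementary (the test-map computation and the single-variable numerical comparison) or quoted directly from \cite{BBM} and \cite{mazowiecka2020minimal}. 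Once the gap inequality is set up, the conclusion that the infimum is attained for $p \in [p',2]$ is immediate, with equality of constants at the endpoint $p=2$ recovering \eqref{eq:minimalwenergyW122}.
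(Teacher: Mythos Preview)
Your proposed gap condition $[\mathrm{Id}]_{W^{\frac1p,p}}^p < 1/C_{1,p}$ can never hold: applying the BBM estimate \eqref{eq:BBM} to the identity map itself gives $1 = |\deg \mathrm{Id}| \le C_{1,p}\, [\mathrm{Id}]_{W^{\frac1p,p}}^p$, i.e.\ $[\mathrm{Id}]_{W^{\frac1p,p}}^p \ge 1/C_{1,p}$, with equality at $p=2$. More generally, in the conformally invariant regime $sp=n$ a single concentrating degree-one bubble costs exactly $m(1)$, so the dichotomy ``$m(1) \ge m(0) + (\text{energy of a degree-}1\text{ bubble})$'' is the tautology $m(1)\ge m(1)$ and rules out nothing. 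This is not a technicality that a sharper constant would fix; the structure of your comparison is the problem.

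What the paper actually uses from \cite{mazowiecka2020minimal} (Lemma~\ref{la:main} here) is that if no minimizer exists in $\Gamma_1$, then for every $\delta>0$ the class splits as $\Gamma_1 \subset \Gamma_{d_1}+\Gamma_{d_2}$ with \emph{both} pieces nontrivial and, crucially, $\#\Gamma_{d_i} < \#\Gamma_1 - \theta/2$. The strict energy drop forces $d_i \notin \{-1,0,1\}$ (since $\#\Gamma_{\pm 1}=\#\Gamma_1$ and $d_i\neq 0$), and together with $d_1+d_2=1$ this yields $|d_1|+|d_2|\ge 5$. Feeding this into the explicit BBM bound $\frac{4\pi^2}{2^{2-p}}|d|\le \#\Gamma_d$ (which follows from the sharp $p=2$ constant via $|u(x)-u(y)|^{2-p}\le 2^{2-p}$) and comparing with $\#\Gamma_1 \le E_{\frac1p,p}(\mathrm{Id})$ gives the correct inequality
\[
5\cdot \frac{4\pi^2}{2^{2-p}} \;\le\; E_{\frac1p,p}(\mathrm{Id}),
\]
whose failure for $p\ge p'\approx 1.13924$ produces the contradiction. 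The factor $5$ is the whole point; without it your inequality is the wrong way round for every $p\in(1,2]$.
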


\medskip

\noindent\textbf{Plan of the paper.}
In Section~\ref{s:compliment}, we complement the results of \cite{mazowiecka2020minimal} by addressing the case \( n = 1 \) and \( p < 2 \), which was previously excluded due to a technical assumption concerning regularity. In Section~\ref{s:mainproof}, we provide the proof of Theorem~\ref{th:main}. The paper concludes with an appendix, in which we compute the energy \( E_{1/p,p}(\mathrm{Id}) \) of the identity map \( \mathrm{Id} \colon \mathbb{S}^1 \to \mathbb{S}^1 \), which yields the critical value \( p' \) for which the existence result holds.

{\bf Acknowledgment.} The project is co-financed by the Polish National Agency for Academic Exchange within Polish Returns Programme - BPN/PPO/2021/1/00019/U/00001.

\section{Complement to the results of \cite{mazowiecka2020minimal}}\label{s:compliment}

The main ingredient in our proof is Lemma 7.7 from \cite{mazowiecka2020minimal}. We begin by recalling the relevant notation. We denote by \( \pi_0 C^0(\mathbb{S}^n, \mathcal{N}) \) the set of free homotopy classes of \( C^0(\mathbb{S}^n, \mathcal{N}) \). Given a class \( \Gamma \in \pi_0 C^0(\mathbb{S}^n, \mathcal{N}) \), we define
\[
 \# \Gamma \coloneqq \inf_{u \in \Gamma \cap W^{\frac{n}{p},p}(\mathbb{S}^n, \mathcal{N})} E_{\frac{n}{p},p}(u, \mathbb{S}^n).
\]

\begin{lemma}[cf.~{\cite[Lemma 7.7]{mazowiecka2020minimal}}]\label{la:main}
Let \( p > 1 \). There exists \( \theta = \theta(p, n, \mathcal{N}) > 0 \) such that the following holds.

Let \( \Gamma_0 \in \pi_0 C^0(\mathbb{S}^n, \mathcal{N}) \). Then at least one of the following alternatives holds:
\begin{enumerate}
    \item There exists a minimizer of \( E_{\frac{n}{p},p}(\cdot, \mathbb{S}^n) \) in \( \Gamma_0 \).

    \item For every \( \delta > 0 \), there exist nontrivial free homotopy classes \( \Gamma_1 = \pi_1(\mathcal{N}) \gamma_1 \) and \( \Gamma_2 = \pi_1(\mathcal{N}) \gamma_2 \) such that
    \[
     \Gamma_0 = \pi_1(\mathcal{N}) \gamma_0 \subset \pi_1(\mathcal{N}) \gamma_1 + \pi_1(\mathcal{N}) \gamma_2,
    \]
    where \( \gamma_1, \gamma_2 \in \pi_n(\mathcal{N}) \), and the following estimates hold:
    \begin{align}
    \# \Gamma_1 + \# \Gamma_2 &\leq \# \Gamma_0 + \delta, \label{eq:sum}\\
    \theta < \# \Gamma_1 &< \# \Gamma_0 - \frac{\theta}{2}, \label{eq:gamma1}\\
    \theta < \# \Gamma_2 &< \# \Gamma_0 - \frac{\theta}{2}. \label{eq:gamma2}
    \end{align}
\end{enumerate}
\end{lemma}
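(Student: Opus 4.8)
The plan is to follow the strategy of \cite[Lemma 7.7]{mazowiecka2020minimal}, which is a concentration-compactness argument for the energy $E_{\frac{n}{p},p}$ on a fixed free homotopy class $\Gamma_0$, and to check that the only place where the excluded hypothesis ($n=1$, $1<p<2$) was used can in fact be dispensed with. First I would fix a minimizing sequence $u_k \in \Gamma_0 \cap W^{\frac{n}{p},p}(\mathbb{S}^n,\mathcal{N})$ with $E_{\frac{n}{p},p}(u_k,\mathbb{S}^n) \to \#\Gamma_0$, and run the standard dichotomy of concentration-compactness on the associated sequence of energy densities. Either the sequence is \emph{compact} (no loss of mass after translation), in which case one upgrades weak convergence to a minimizer — here one uses that the Gagliardo energy is weakly lower semicontinuous, that the constraint $u\in\mathcal{N}$ a.e.\ passes to the limit, and that the homotopy class is preserved because $W^{\frac{n}{p},p}$ embeds into $\mathrm{VMO}$ so the degree/free homotopy class is continuous along the sequence (this is exactly the regime where the critical Sobolev exponent $\frac{n}{p}\cdot p = n$ matters and where $n=1$, $p<2$ behaves no differently from $n=1$, $p=2$). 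This gives alternative (1). Or the sequence \emph{splits}: after rescaling one isolates two bubbles carrying homotopy classes $\gamma_1,\gamma_2 \in \pi_n(\mathcal{N})$ whose sum (modulo the $\pi_1(\mathcal{N})$-action coming from the basepoint) gives back $\gamma_0$, and the energies satisfy the near-additivity \eqref{eq:sum}; the nontriviality and the two-sided bounds \eqref{eq:gamma1}--\eqref{eq:gamma2} then follow from the existence of an energy gap $\theta>0$ below which no nontrivial class can live (the threshold $\theta = \theta(p,n,\mathcal{N})$ of \cite{mazowiecka2020minimal}), giving alternative (2).

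The technical point to address — and the reason the case was excluded in \cite{mazowiecka2020minimal} — is a regularity input used in the splitting step: one needs enough control on the minimizing sequence (or on the bubbles extracted from it) to perform the cut-and-paste that realizes the decomposition $\Gamma_0 \subset \Gamma_1 + \Gamma_2$ with energy loss at most $\delta$. In \cite{mazowiecka2020minimal} this was arranged via an $\varepsilon$-regularity / partial regularity statement for almost-minimizers that was only proven under a structural assumption failing at $n=1$, $p<2$. The fix I would carry out is to observe that for $n=1$ the domain is one-dimensional, so the entire argument can be made using only the $\mathrm{VMO}$-continuity of the degree together with the absolute continuity of the Gagliardo double integral: on $\mathbb{S}^1$ one can choose, for each small scale, two complementary arcs on which the map is close (in $\mathrm{VMO}$, hence in degree) to a constant on the overlap, and glue a geodesic bridge there at a cost that is $O(\delta)$ because $[u]_{W^{\frac1p,p}}$ of a path connecting two nearby points scales down with the oscillation. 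This replaces the missing higher-dimensional regularity entirely; no partial regularity is needed when $n=1$.

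Concretely, the steps in order are: (i) set up the minimizing sequence and its energy densities; (ii) invoke concentration-compactness dichotomy and dispose of the vanishing case using the $\mathrm{VMO}$ lower bound $\#\Gamma_0 \geq c(\mathcal{N})>0$ for nontrivial $\Gamma_0$; (iii) in the compact case produce the minimizer, checking the class is preserved via $W^{\frac1p,p}\hookrightarrow \mathrm{VMO}$ and lower semicontinuity — this yields (1); (iv) in the splitting case, localize on two arcs, use absolute continuity of the double integral plus a short-geodesic interpolation on $\mathbb{S}^1$ to cut the map into two pieces representing $\gamma_1,\gamma_2$ with total energy $\leq \#\Gamma_0 + \delta$, establishing \eqref{eq:sum}; (v) deduce \eqref{eq:gamma1}--\eqref{eq:gamma2} and the nontriviality of $\Gamma_1,\Gamma_2$ from the energy gap $\theta$: each $\Gamma_i$ is nontrivial (else the other would have energy $\geq \#\Gamma_0$, contradicting the strict gap after choosing $\delta<\theta/2$), each has energy $>\theta$ by definition of the gap, and each has energy $<\#\Gamma_0 - \theta/2$ because the complementary piece contributes at least $\theta$. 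The main obstacle is step (iv): making the arc decomposition genuinely produce homotopy classes in $\pi_n(\mathcal{N}) = \pi_1(\mathcal{N})$ whose sum recovers $\gamma_0$ while keeping the energy loss controlled — this is where one must be careful that the $\mathrm{VMO}$ control on the overlap arc is strong enough to pin down the free homotopy class of each piece, and I expect the bulk of the work in the $n=1$, $p<2$ case to be verifying precisely this, in place of the regularity argument of \cite{mazowiecka2020minimal}.
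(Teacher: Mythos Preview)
Your diagnosis is correct: the lemma is Lemma~7.7 of \cite{mazowiecka2020minimal}, and the only missing ingredient for $n=1$, $1<p<2$ is the regularity result \cite[Theorem~3.1]{mazowiecka2020minimal}. But your proposed fix---bypassing regularity altogether via a VMO-based arc decomposition with geodesic bridges---has a genuine gap at the step you yourself flag as the main obstacle, step~(iv). VMO smallness on an overlap arc does \emph{not} give pointwise (or even uniform) closeness of $u$ to a constant there; a map can have arbitrarily small mean oscillation on an arc while its pointwise values range over all of $\mathcal{N}$. Gluing a geodesic bridge with controlled $W^{\frac1p,p}$-energy requires that the actual endpoint values of the two pieces be close in $\mathcal{N}$, and VMO control alone does not deliver this. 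What is needed is precisely H\"older (or at least uniform) continuity of the almost-minimizer on small arcs---i.e.\ the regularity input you are trying to avoid.

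The paper takes the opposite route: rather than bypass \cite[Theorem~3.1]{mazowiecka2020minimal}, it extends that theorem to $1<p<2$. The assumption $p\ge 2$ enters the original proof in exactly two places. In Step~2, the monotonicity inequality $\langle J_p(b)-J_p(a),\, b-a\rangle \gtrsim |b-a|^p$ (valid only for $p\ge 2$) is replaced by its $p<2$ analogue $\langle J_p(b)-J_p(a),\, b-a\rangle \ge (p-1)|b-a|^2 \int_0^1 |a+t(b-a)|^{p-2}\,dt$, after which the elementary Young inequality $A^p \le A^2 B^{p-2} + B^p$ recovers an $L^p$ bound on the difference quotients $\delta_h u$. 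In Step~3, a H\"older estimate that would involve a negative exponent when $p<2$ is rewritten using the auxiliary parameter $\kappa = \frac{2}{s}-2$ so that all exponents remain positive. With the regularity theorem thus extended, the entire proof of Lemma~7.7 in \cite{mazowiecka2020minimal} goes through verbatim; no new concentration-compactness argument is needed.
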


Lemma~\ref{la:main} was proved in \cite{mazowiecka2020minimal} in the case \( n \geq 2 \), or when \( n = 1 \) and \( p \geq 2 \). The restriction \( p \geq 2 \) was a technical assumption introduced to avoid dealing with regularity issues for the fractional \( p \)-Laplacian when \( p < 2 \).

In this section, we prove Lemma~\ref{la:main} in the remaining case \( n = 1 \) and \( 1 < p < 2 \). The only missing ingredient in \cite{mazowiecka2020minimal} for treating this case was the regularity result \cite[Theorem 3.1]{mazowiecka2020minimal}. A careful analysis of its proof reveals that the assumption \( p \geq 2 \) was used in two places: in Step 2 on p.~26, and in the proof of Step 3, specifically in the inequality (3.29). In the following subsections, we show how to modify the argument in each case so that the proof works for \( n = 1 \) and \( p = \frac{1}{s} < 2 \).

\subsection{Adaptation of Step 2 in \cite[Proof of Theorem 3.1]{mazowiecka2020minimal} to the case \texorpdfstring{$p<2$}{p<2}}
\begin{theorem}\label{th:reg}
Let $1<p< 2$. Assume that $u \in W^{s,p}(\R^n)$, $f \in L^1(\R^n)$ solve
\begin{equation}\label{eq:blablabla}
 \int_{\R^n}\int_{\R^n} \frac{|u(x)-u(y)|^{p-2}(u(x)-u(y))(\varphi(x)-\varphi(y))}{|x-y|^{n+sp}} \dx \dy = \int_{\R^n} f \varphi \quad \forall \varphi \in C_c^\infty(B(R)).
\end{equation}
If $u \in L^\infty \cap C^\alpha(B(R))$ for some $\alpha > 0$ then $u \in W^{s+\gamma,p}(B(R/2))$ for any $\gamma < \min\{\frac{\alpha}{2p},1\}$.
\end{theorem}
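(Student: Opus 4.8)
\emph{Overall strategy.}
The plan is to run a localized finite-difference (Nirenberg-type) argument and show that suitable differences of $u$ decay in $|h|$ fast enough in the $W^{s,p}$-seminorm to gain $\gamma$ extra derivatives. Fix $\delta>0$ small, a cutoff $\eta\in C_c^\infty(B(R-\delta))$ with $0\le\eta\le1$ and $\eta\equiv1$ on $B(R/2+\delta)$, and increments $h\in\R^n$ with $|h|<\delta$. Since the operator in \eqref{eq:blablabla} is translation invariant, $u(\cdot+h)$ solves \eqref{eq:blablabla} with $f$ replaced by $f(\cdot+h)$ when tested against $C_c^\infty(B(R-\delta))$. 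First I would subtract the equation for $u$ from the one for $u(\cdot+h)$ and test against $\psi_h:=\eta^{p}\,\Delta_h u$, where $\Delta_h u:=u(\cdot+h)-u$ (after first mollifying $u$ so that $\psi_h$ is admissible, then passing to the limit), which yields
\[
 \iint_{\R^n\times\R^n}\frac{\big(\mathcal N(U_h)-\mathcal N(U)\big)\big(\psi_h(x)-\psi_h(y)\big)}{|x-y|^{n+sp}}\dx\dy=\int_{\R^n}f(x)\,\Delta_{-h}\psi_h(x)\dx ,
\]
with $\mathcal N(t):=|t|^{p-2}t$, $U(x,y):=u(x)-u(y)$, $U_h(x,y):=u(x+h)-u(y+h)$. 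A crucial point is that one must use the plain difference $\Delta_h u$, not the difference quotient $\Delta_h u/|h|$: under the mere hypothesis $u\in C^\alpha(B(R))$ the latter need not be bounded.

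\emph{The subquadratic coercivity estimate.}
Splitting $\psi_h(x)-\psi_h(y)=\eta(x)^p\big(\Delta_h u(x)-\Delta_h u(y)\big)+\big(\eta(x)^p-\eta(y)^p\big)\Delta_h u(y)$ and using $\Delta_h u(x)-\Delta_h u(y)=U_h-U$, the main term on the left is
\[
 G(h):=\iint\frac{\big(\mathcal N(U_h)-\mathcal N(U)\big)(U_h-U)}{|x-y|^{n+sp}}\,\eta(x)^p\dx\dy\ \ge\ c_p\iint\frac{\big(|U_h|+|U|\big)^{p-2}|U_h-U|^2}{|x-y|^{n+sp}}\,\eta(x)^p\dx\dy ,
\]
the inequality being the monotonicity estimate valid in the subquadratic regime $1<p<2$; this is exactly what has to replace the bound (3.29), and the estimate of Step~2 on p.~26, of \cite{mazowiecka2020minimal}, both of which rely on the stronger coercivity $\big(\mathcal N(a)-\mathcal N(b)\big)(a-b)\gtrsim|a-b|^p$ that holds only for $p\ge2$. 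The commutator term containing $\eta(x)^p-\eta(y)^p$ is estimated with the subquadratic Hölder bound $|\mathcal N(a)-\mathcal N(b)|\le C|a-b|^{p-1}$, the Lipschitz bound $|\eta(x)^p-\eta(y)^p|\le C|x-y|$, the continuity estimate $\|\Delta_h u\|_{L^\infty(B(R-\delta))}\le[u]_{C^\alpha(B(R))}|h|^\alpha$, and Hölder's inequality with exponents $\tfrac{p}{p-1}$ and $p$, which uses $\iint_{B(R)\times B(R)}|x-y|^{p-n-sp}\,\dx\dy<\infty$ (finite since $p(1-s)>0$) together with $[u]_{W^{s,p}}<\infty$. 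The nonlocal tail, i.e.\ the region where $\psi_h$ vanishes, is dominated by $\|\Delta_h u\|_{L^\infty(B(R-\delta))}$ times a constant depending on $\|u\|_{L^\infty(B(R))}$ and $\|u\|_{W^{s,p}(\R^n)}$. Finally the right-hand side is bounded by $\|f\|_{L^1}\|\Delta_{-h}\psi_h\|_{L^\infty}\le C\|f\|_{L^1}[u]_{C^\alpha(B(R))}|h|^\alpha$ — this is the precise point at which a merely integrable $f$ is accommodated, the $C^\alpha$-smallness of the test function compensating for the absence of better integrability. Collecting these, $G(h)\le C|h|^{\beta}$ for some $\beta=\beta(\alpha,p)>0$ and a constant depending on $\|f\|_{L^1},[u]_{C^\alpha(B(R))},\|u\|_{L^\infty(B(R))},[u]_{W^{s,p}},R,n,s,p$.

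\emph{From the degenerate quantity to a Sobolev gain.}
The quantity controlled by $G(h)$ is degenerate because of the weight $(|U_h|+|U|)^{p-2}$ with negative exponent. To extract the genuine Gagliardo seminorm of $\Delta_h u$ I would use the pointwise identity
\[
 |U_h-U|^{p}=\big[(|U_h|+|U|)^{p-2}|U_h-U|^2\big]^{p/2}\,(|U_h|+|U|)^{p(2-p)/2}
\]
and Hölder's inequality with exponents $\tfrac{2}{p}$ and $\tfrac{2}{2-p}$, obtaining
\[
 [\Delta_h u]_{W^{s,p}(B(R/2))}^{p}\ \le\ \big(c_p^{-1}G(h)\big)^{p/2}\Big(\iint\frac{(|U_h|+|U|)^{p}}{|x-y|^{n+sp}}\,\eta(x)^p\dx\dy\Big)^{(2-p)/2}.
\]
The last factor is finite and bounded uniformly in $h$: its near-diagonal part is comparable to $[u]_{W^{s,p}}^p$ and its tail is controlled by $\|u\|_{L^p(\R^n)}$ and $\|u\|_{L^\infty(B(R))}$. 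Tracking the exponents gives $[\Delta_h u]_{W^{s,p}(B(R/2))}\le C|h|^{\gamma}$ for every $\gamma<\min\{\tfrac{\alpha}{2p},1\}$, and the standard characterization of fractional Sobolev spaces by the decay of finite differences — relating $\sup_{0<|h|<\delta}|h|^{-\gamma}[\Delta_h u]_{W^{s,p}(B(R/2))}$ to $[u]_{W^{s+\gamma,p}(B(R/2))}$, and using second-order differences in the same way when $s+\gamma\ge1$ — then yields $u\in W^{s+\gamma,p}(B(R/2))$ for all such $\gamma$, which is the claim.

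\emph{Main obstacle.}
The hard part, and the sole genuine departure from \cite[Proof of Theorem~3.1]{mazowiecka2020minimal}, is that for $1<p<2$ the clean coercivity $\big(\mathcal N(a)-\mathcal N(b)\big)(a-b)\gtrsim|a-b|^p$ fails, so one is left only with the degenerate estimate carrying the weight $(|a|+|b|)^{p-2}$. This forces the additional Hölder interpolation against the a priori $W^{s,p}$ information, and the delicate book-keeping is to make sure that all error terms generated by the cutoff and by the nonlocality of the operator remain absorbable with a strictly positive power of $|h|$ — which is exactly where the hypothesis $u\in L^\infty\cap C^\alpha(B(R))$ enters, and which is responsible for the differentiability gain being only the fraction $\tfrac{\alpha}{2p}$ of the Hölder exponent.
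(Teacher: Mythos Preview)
Your proposal is correct and follows essentially the same route as the paper: test the differenced equation with $\eta\,\Delta_h u$, invoke the subquadratic monotonicity inequality to get a bound of order $|h|^{\alpha}$ on the degenerate quantity $\iint (|U_h|+|U|)^{p-2}|U_h-U|^2\,|x-y|^{-n-sp}$, upgrade this to a bound on $[\Delta_h u]_{W^{s,p}}$, and conclude by the difference-quotient characterization. The only cosmetic divergence is in the upgrading step: the paper uses the pointwise Young inequality $A^p\le A^2B^{p-2}+B^p$ (with $B=W$ the integral weight $\int_0^1|U+t(U_h-U)|^{p-2}\,dt$) and then bounds $\iint W^p/|x-y|^{n+sp}$ by $[u]_{W^{s,p}}^p$, whereas you use the H\"older factorisation $|U_h-U|^p=\bigl[(|U_h|+|U|)^{p-2}|U_h-U|^2\bigr]^{p/2}(|U_h|+|U|)^{p(2-p)/2}$ with exponents $\tfrac{2}{p},\tfrac{2}{2-p}$; both produce $[\Delta_h u]_{W^{s,p}}\lesssim |h|^{\alpha/2}$ and hence the claim. (One small misattribution: inequality~(3.29) in \cite{mazowiecka2020minimal} belongs to Step~3, not Step~2, and is handled by the paper separately; the theorem you are proving here is only the Step~2 replacement.)
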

\begin{proof}
 The first part of the proof remains the same. We recall the notation (taken originally from \cite{BL17}): for $f\colon \R^n \to \R^M$ and $h\in \R^n$ we write $f_h(x)\coloneqq f(x+h)$, $\delta_h f(x) \coloneqq f(x+h) - f(x)$, we also write $J_p(v) \coloneqq |v|^{p-2}v$.
 
 Arguing exactly as in \cite[Proof of Theorem 3.7]{mazowiecka2020minimal} we obtain
\begin{equation}\label{eq:fromsucks}
 \begin{split}
  \int_{B(R)}\int_{B(R)} & \eta(x) \frac{\brac{J_{p}(u_h(x)-u_h(y))-J_{p}(u(x)-u(y))} (u_h(x)-u_h(y) - (u(x)- u(y)))}{|x-y|^{n+sp}} \dx \dy\\
   &\aleq |h|^{\alpha} [u]_{C^{\alpha}}
	\left( ||f||_{L^1(\mathbb{R}^n)}
	+ [u]_{W^{s,p}(\mathbb{R}^n)}^{p-1} R^{\frac{n}{p} - s} \right),
  \end{split}
\end{equation}
where $\delta= \frac{1}{100}R$, $|h|<\delta$, $\eta \in C_c^\infty(B(R-20\delta),[0,1])$, $\eta \equiv 1$ in $B(R-30\delta)$, and $|\nabla \eta|\aleq \frac 1\delta$.

Indeed, for
\[
 \mathcal{I}_1 \coloneqq \int_{B(R)}\int_{B(R)} \frac{\brac{J_{p}(u_h(x)-u_h(y))-J_{p}(u(x)-u(y))} (\eta(x) \delta_h u(x)-\eta(y) \delta_h u(y))}{|x-y|^{n+sp}} \dx \dy
\]
we have by \cite[bottom of  p. 25, Proof of Theorem 3.7]{mazowiecka2020minimal} that
\begin{align*}
	\mathcal{I}_1
	& \ge
	\int_{B(R)} \int_{B(R)} \eta(x) \frac{(J_p(u_h(x) - u_h(y)) - J_p(u(x) - u(y))) (u_h(x) - u_h(y) - (u(x) - u(y)))}{|x-y|^{n+sp}}\dx\dy \\
	& - \int_{B(R)} \int_{B(R)} |\delta_h u(y)| \frac{|J_p(u_h(x) - u_h(y)) - J_p(u(x) - u(y))| |\eta(x) - \eta(y)|}{|x-y|^{n+sp}}\dx\dy.
\end{align*}
The second term can be bound from above by
\begin{align*}
	2 |h|^{\alpha} [u]_{C^{\alpha}} \delta^{-1} [u]_{W^{s,p}(B(R+\delta))}^{p-1} R^{\frac{n}{p} + (1-s)},
\end{align*}
which gives
\begin{align}\label{appendix_a_first}
	&
	\int_{B(R)} \int_{B(R)} \eta(x) \frac{(J_p(u_h(x) - u_h(y)) - J_p(u(x) - u(y))) (u_h(x) - u_h(y) - (u(x) - u(y)))}{|x-y|^{n+sp}}\dx\dy \\
	& \le \mathcal{I}_1 + 2 |h|^{\alpha} [u]_{C^{\alpha}} \delta^{-1} [u]_{W^{s,p}(B(R+\delta))}^{p-1} R^{\frac{n}{p} + (1-s)}. \notag
\end{align}
On the other hand for
\[
 \mathcal{I}_2 \coloneqq \left |\int_{\R^n\setminus B(R)}\int_{B(R-20\delta)} \frac{\brac{J_{p}(u_h(x)-u_h(y))-J_{p}(u(x)-u(y))} \eta(x) \delta_h u(x)}{|x-y|^{n+sp}} \dx \dy \right |
\]
we have by \cite[top of p. 25, Proof of Theorem 3.7]{mazowiecka2020minimal}
\begin{align}\label{appendix_a_second}
	\mathcal{I}_1 - 2 \mathcal{I}_2
    & \le
    \int_{B(R)} \int_{B(R)} \frac{(J_p(u_h(x) - u_h(y)) - J_p(u(x) - u(y))) (\eta(x) \delta_h u(x) - \eta(y) \delta_h u(y))}{|x-y|^{n+sp}}\dx\dy \\
	& \lesssim
	[u]_{C^{\alpha}(B(R))} ||f||_{L^1(\mathbb{R}^n)} |h|^{\alpha}. \notag
\end{align}
Combining \eqref{appendix_a_first} with \eqref{appendix_a_second} we get
\begin{align*}
	&
	\int_{B(R)} \int_{B(R)} \eta(x) \frac{(J_p(u_h(x) - u_h(y)) - J_p(u(x) - u(y))) \cdot (u_h(x) - u_h(y) - (u(x) - u(y)))}{|x-y|^{n+sp}}\dx\dy \\
	& \lesssim [u]_{C^{\alpha}(B(R))} ||f||_{L^1(\mathbb{R}^n)} |h|^{\alpha}
    + 2 \mathcal{I}_2
	+ 2 |h|^{\alpha} [u]_{C^{\alpha}} \delta^{-1} [u]_{W^{s,p}(B(R+\delta))}^{p-1} R^{\frac{n}{p} + (1-s)}.
\end{align*}
Now, as \cite[p. 25, Proof of Theorem 3.7]{mazowiecka2020minimal}
\begin{align*}
	\mathcal{I}_2
	\lesssim 2 |h|^{\alpha} [u]_{C^{\alpha}} [u]_{W^{s,p}(\mathbb{R}^n)}^{p-1} \delta^{-s} R^{\frac{n}{p}}
\end{align*}
we obtain
\begin{align*}
	&
	\int_{B(R)} \int_{B(R)} \eta(x) \frac{(J_p(u_h(x) - u_h(y)) - J_p(u(x) - u(y))) \cdot (u_h(x) - u_h(y) - (u(x) - u(y)))}{|x-y|^{n+sp}}\dx\dy \\
	& \lesssim
	|h|^{\alpha} [u]_{C^{\alpha}}
	\left( ||f||_{L^1(\mathbb{R}^n)}
	+ 4 [u]_{W^{s,p}(\mathbb{R}^n)}^{p-1} \delta^{-s} R^{\frac{n}{p}}
	+ 2 \delta^{-1} [u]_{W^{s,p}(B(R+\delta))}^{p-1} R^{\frac{n}{p} + (1-s)} \right) \\
	& \lesssim |h|^{\alpha} [u]_{C^{\alpha}}
	\left( ||f||_{L^1(\mathbb{R}^n)}
	+ [u]_{W^{s,p}(\mathbb{R}^n)}^{p-1} R^{\frac{n}{p} - s}
	+ [u]_{W^{s,p}(B(R+\delta))}^{p-1} R^{\frac{n}{p} - s} \right) \\
    & \lesssim |h|^{\alpha} [u]_{C^{\alpha}}
	\left( ||f||_{L^1(\mathbb{R}^n)}
	+ [u]_{W^{s,p}(\mathbb{R}^n)}^{p-1} R^{\frac{n}{p} - s} \right),
\end{align*}
which gives \eqref{eq:fromsucks}.

We will apply to the left-hand side of \eqref{eq:fromsucks} the following elementary inequality
\begin{equation}\label{eq:p-blabla}
 \langle|b|^{p-2}b - |a|^{p-2}a, b-a \rangle
 \ge (p-1)|b-a|^2 \int_0^1 |a+t(b-a)|^{p-2} \dif t,
\end{equation}
which is valid for $1<p<2$ and $a,b\in \R^M$ (see e.g., \cite[\textsection 12, (IV)]{Lindqvist}). We obtain
\[
 \begin{split}
  \int_{B(R-30\delta)}\int_{B(R-30\delta)}&\frac{|u_h(x) - u_h(y)-(u(x)-u(y))|^2 W^{p-2}(u,x,y,h)}{|x-y|^{n+sp}}\dif x \dif y\\
  &\aleq |h|^{\alpha} [u]_{C^{\alpha}}
	\left( ||f||_{L^1(\mathbb{R}^n)}
	+ [u]_{W^{s,p}(\mathbb{R}^n)}^{p-1} R^{\frac{n}{p} - s} \right),
 \end{split}
\]
where $W^{p-2}(u,x,y,h) = \int_{0}^{1} \abs{u(x) - u(y) + t(u_h(x) - u_h(y) - (u(x) - u(y)))}^{p-2} \dif t$.

Dividing both sides by $|h|^\alpha$ we get
\begin{equation}\label{eq:tralalalala}
 \begin{split}
  \int_{B(R-30\delta)}\int_{B(R-30\delta)}&\frac{\abs{\frac{u_h(x) - u_h(y)-(u(x)-u(y))}{|h|^{\frac{\alpha}{2}}}}^2W^{p-2}(u,x,y,h)}{|x-y|^{n+sp}}\dif x \dif y\\
  &\aleq [u]_{C^{\alpha}}
	\left( ||f||_{L^1(\mathbb{R}^n)}
	+ [u]_{W^{s,p}(\mathbb{R}^n)}^{p-1} R^{\frac{n}{p} - s} \right).
 \end{split}
\end{equation}
From Young's inequality we get for any $A>0$, $B>0$, $1<p<2$ 
\[
 B^{2-p}A^p\le \frac{A^2}{\frac2p} + \frac{B^2}{\frac{2}{2-p}} \le A^2 + B^2 \quad \Rightarrow \quad A^p \le A^2 B^{p-2} + B^p.
\]
Using the latter inequality in \eqref{eq:tralalalala} we obtain
\begin{equation}\label{eq:1}
 \begin{split}
  &\int_{B(R-30\delta)}\int_{B(R-30\delta)} \frac{\abs{|h|^{-\frac{\alpha}{2}}\brac{\delta_h u(x) - \delta_h u(y)}}^p}{|x-y|^{n+sp}}\dif x \dif y\\
  &\aleq [u]_{C^{\alpha}}
	\left( ||f||_{L^1(\mathbb{R}^n)}
	+ [u]_{W^{s,p}(\mathbb{R}^n)}^{p-1} R^{\frac{n}{p} - s} \right) \\
  &\quad + \int_{B(R-30\delta)}\int_{B(R-30\delta)} \frac{W^p(u,x,y,h)}{|x-y|^{n+sp}}\dif x \dif y.
 \end{split}
\end{equation}
Now we estimate
\begin{equation}\label{eq:2}
 \begin{split}
  &\int_{B(R-30\delta)}\int_{B(R-30\delta)} \frac{W^p(u,x,y,h)}{|x-y|^{n+sp}}\dif x \dif y\\
  &=\int_{B(R-30\delta)}\int_{B(R-30\delta)} \frac{\brac{\int_{0}^{1} \abs{u(x) - u(y) + t(u_h(x) - u_h(y) - (u(x) - u(y)))}^{p-2} \dif t}^{\frac{p}{p-2}}}{|x-y|^{n+sp}}\dif x \dif y\\
  &\le \int_{B(R-30\delta)}\int_{B(R-30\delta)} \frac{\max_{t\in[0,1]}\abs{u(x) - u(y) + t(u_h(x) - u_h(y) - (u(x) - u(y)))}^{p} }{|x-y|^{n+sp}}\dif x \dif y\\
  &\aleq \int_{B(R-30\delta)}\int_{B(R-30\delta)} \frac{|u(x) - u(y)|^p + |u(x+h) - u(y+h)|^p}{|x-y|^{n+sp}} \dif x \dif y\\
  &\aleq [u]^p_{W^{s,p}(B(R))}.
 \end{split}
\end{equation}
Combining \eqref{eq:1} and \eqref{eq:2} we obtain
\[
 [|h|^{-\frac \alpha2}\delta_h u]^p_{W^{s,p}(B(R-30\delta))} \le C(u, R, f,p, n, s) <\infty.
\]
The theorem follows from a difference quotient characterization of Sobolev spaces, see \cite[Lemma 3.8]{mazowiecka2020minimal} for details.
\end{proof}

\subsection{Step 3 in \cite[proof of Theorem 3.1]{mazowiecka2020minimal} for \texorpdfstring{$p<2$}{p<2}}
\hfill

In \cite[(3.29)]{mazowiecka2020minimal}, Hölder's inequality was used, but for $p = \frac{1}{s} < 2$, the exponent $\frac{1}{s} - 2$ is negative, and the argument must be changed. We therefore modify the estimate of $[u]_{W^{s_0,\frac{1}{s}}(B(r))}^{\frac{1}{s}}$ from \cite[p.~31]{mazowiecka2020minimal}.

We use the notation of the proof of Theorem 3.10 in \cite{mazowiecka2020minimal}. First we note that since $\sqrt{\eta}(x) \equiv 1$ on $B(r)$, we have for $x,y \in B(r)$,
\[
u(x) - u(y) = \tilde{\varphi}(x) - \tilde{\varphi}(y),
\]
where $\tilde{\varphi} := \sqrt{\eta}(u - (u)_{B(R)})$. Let us denote $\kappa \coloneqq \frac{2}{s} - 2$, which implies  $\frac 1s-\kappa>0.$ We have
\begin{equation}
 [u]_{W^{s_0,\frac{1}{s}}(B(r))}^{\frac{1}{s}} =
 \int_{B(r)}\int_{B(r)} \frac{|u(x)-u(y)|^{\frac{1}{s}-\kappa} |\tilde \varphi(x) - \tilde \varphi(y)|^\kappa}{|x-y|^{1+\frac{s_0}{s}}} \dx \dy.
\end{equation}

As in \cite[p.31]{mazowiecka2020minimal}
\begin{equation}
\begin{split}
|\tilde{\varphi}(x)-\tilde{\varphi}(y)|^2
 &= (\sqrt{\eta}(x)-\sqrt{\eta}(y)) (u(x)-(u)_{B(R)}) \cdot(\tilde{\varphi}(x)-\tilde{\varphi}(y))\\
 &\quad +  (u(x)-u(y))(\sqrt{\eta}(y)-\sqrt{\eta}(x))\tilde{\varphi}(x)\\
 &\quad +  (u(x)-u(y))(\varphi(x)-\varphi(y)).
 \end{split}
\end{equation}
Thus, using the subadditivity of $f(t)=t^{\frac{\kappa}{2}}$ and recalling that $|\nabla \sqrt{\eta}|\le \frac{C_1}{\delta}$ we get
\begin{equation}\label{eq:differenceofvarphitildetoalpha}
\begin{split}
|\tilde{\varphi}(x)-\tilde{\varphi}(y)|^\kappa
 & \le C_1^{\frac \kappa 2}\frac{|x-y|^{\frac \kappa 2}}{\delta^{\frac \kappa 2}}|u(x)-(u)_{B(R)}|^{\frac \kappa 2} \brac{|\tilde{\varphi}(x)-\tilde{\varphi}(y)|^{\frac \kappa 2}+|u(x)-u(y)|^{\frac \kappa 2}}\\
 &\quad +  \brac{(u(x)-u(y))(\varphi(x)-\varphi(y))}^{\frac \kappa 2}.
 \end{split}
\end{equation}
Moreover, we have
\begin{equation}
\begin{split}
 &|u(x)-u(y)|^{\frac{1}{s}-\kappa} \brac{(u(x)-u(y))\, (\varphi(x)-\varphi(y))}^{\frac \kappa 2}\\
 &=|u(x)-u(y)|^{\frac 1s\frac{2-\kappa}{2}}|u(x)-u(y)|^{\frac{1}{s}-\kappa-\frac 1s\frac{2-\kappa}{2}}\brac{(u(x)-u(y))\, (\varphi(x)-\varphi(y))}^{\frac \kappa 2}.
 \end{split}
\end{equation}
Applying to the latter Young's inequality with $r= \frac{2}{2-\kappa},\,r'=\frac{2}{\kappa}$,  we obtain
\begin{equation}\label{eq:Youngwitheps}
 \begin{split}
  &|u(x)-u(y)|^{\frac{1}{s}-\kappa} \brac{(u(x)-u(y))\, (\varphi(x)-\varphi(y))}^{\frac \kappa 2} \\
  &\le \frac{2-\kappa}{2} |u(x)-u(y)|^{\frac{1}{s}} + \frac{\kappa}{2} |u(x)-u(y)|^{\frac{1}{s}-2} (u(x)-u(y))\, (\varphi(x)-\varphi(y)).
 \end{split}
\end{equation}
Thus, combining \eqref{eq:differenceofvarphitildetoalpha} with \eqref{eq:Youngwitheps} we get
\begin{equation}
 \begin{split}
  |u(x) - u(y)|^\frac{1}{s} 
  &=  |u(x) - u(y)|^{\frac{1}{s}-\kappa}|\tilde \varphi(x) - \tilde \varphi(y)|^\kappa\\
  &\le  C_1^\frac \kappa 2 |u(x) - u(y)|^{\frac{1}{s}-\kappa} \brac{\frac{|x-y|^{\frac \kappa 2}}{\delta^{\frac \kappa 2}}|u(x)-(u)_{B(R)}|^{\frac \kappa 2} \brac{|\tilde{\varphi}(x)-\tilde{\varphi}(y)|^{\frac \kappa 2}+|u(x)-u(y)|^{\frac \kappa 2}}} \\
  &\quad + \frac{2-\kappa}{2} |u(x) - u(y)|^\frac{1}{s} + \frac{\kappa}{2} |u(x)-u(y)|^{\frac{1}{s}-2} (u(x)-u(y))\, (\varphi(x)-\varphi(y)).
 \end{split}
\end{equation}
Since $\frac{2-\kappa}{2}<1$ we may absorb one term and obtain
\begin{equation}
 \begin{split}
  |u(x) - u(y)|^\frac{1}{s}
  &\aleq  |u(x) - u(y)|^{\frac{1}{s}-\kappa} \brac{\frac{|x-y|^{\frac \kappa 2}}{\delta^{\frac \kappa 2}}|u(x)-(u)_{B(R)}|^{\frac \kappa 2} \brac{|\tilde{\varphi}(x)-\tilde{\varphi}(y)|^{\frac \kappa 2}+|u(x)-u(y)|^{\frac \kappa 2}}} \\
  &\quad +  |u(x)-u(y)|^{\frac{1}{s}-2} (u(x)-u(y))\, (\varphi(x)-\varphi(y)).
 \end{split}
\end{equation}

Hence,
\begin{equation}\label{eq:old(3.28)}
 \begin{split}
[u]_{W^{s_0,\frac{1}{s}}(B(r))}^{\frac{1}{s}}
&\aleq
 \int_{B(\rho - 3\delta)}\int_{B(\rho - 3\delta)} \frac{|u(x)-u(y)|^{\frac{1}{s}-2} \brac{(u(x)-u(y))\, (\varphi(x)-\varphi(y))}}{|x-y|^{1+\frac{s_0}{s}}} \dx \dy\\
&\quad + \delta^{-\frac \kappa 2} \int_{B(\rho - 3\delta)}\int_{B(\rho - 3\delta)} \frac{|u(x)-u(y)|^{\frac{1}{s}-\frac \kappa 2} |u(x)-(u)_{B(R)}|^{\frac \kappa 2}}{|x-y|^{1+\frac{s_0}{s}-\frac \kappa 2}} \dx \dy\\
&\quad + \delta^{-\frac \kappa 2} \int_{B(\rho - 3\delta)} \int_{B(\rho - 3\delta)} \frac{|u(x)-u(y)|^{\frac{1}{s}-\kappa} |\tilde{\varphi}(x)-\tilde{\varphi}(y)|^{\frac \kappa 2} |u(x)-(u)_{B(R)}|^{\frac \kappa 2} }{|x-y|^{1+\frac{s_0}{s}-\frac \kappa 2}} \dx \dy.
 \end{split}
\end{equation}
This is the inequality that we will replace (3.28) in \cite{mazowiecka2020minimal} with. The first term of (3.28) is the same term as in (3.28) and we may estimate the two second terms similarly as in (3.29), using H\"{o}lder's inequality.

For any $w\colon \R \to \R^M$ we have using that $\frac{1}{1-s\kappa}, \frac{2}{s\kappa}>1$ and $(1-s\kappa) + \frac{s\kappa}{2}+ \frac{s\kappa}{2}=1$
\begin{equation}\label{eq:old(3.29)}
 \begin{split}
  &\delta^{-\frac \kappa 2} \int_{B(\rho - 3\delta)} \int_{B(\rho - 3\delta)} \frac{|u(x)-u(y)|^{\frac{1}{s}-\kappa} |w(x)-w(y)|^{\frac \kappa 2} |u(x)-(u)_{B(R)}|^{\frac \kappa 2} }{|x-y|^{1+\frac{s_0}{s}-\frac \kappa 2}} \dx \dy\\
  &=\delta^{-\frac \kappa 2} \int_{B(\rho - 3\delta)} \int_{B(\rho - 3\delta)} \frac{|u(x)-u(y)|^{\frac{1}{s}-\kappa}}{|x-y|^{s(\frac 1s - \kappa)}}
  \frac{|w(x)-w(y)|^{\frac \kappa 2}}{|x-y|^{s\frac{\kappa}{2}}}
  \frac{  |u(x)-(u)_{B(R)}|^{\frac \kappa 2} }{|x-y|^{\frac{\kappa s^2 - \kappa s - 2s + 2s_0}{2s}
}} \frac{\dx \dy}{|x-y|}\\
  &\le \delta^{-\frac\kappa2}[u]_{W^{s,\frac 1s}(B(R))}^{\frac 1s - \kappa}[w]_{W^{s,\frac 1s}(B(R))}^{\frac \kappa 2} \brac{\int_{B(R)} \int_{B(R)} \frac{|u(x) - (u)_{B(R)}|^\frac{1}{s}}{|x-y|^{1+\frac{2}{s\kappa}\brac{\frac{\kappa s^2 - \kappa s - 2s + 2s_0}{2s}
}}}\dx \dy}^\frac{s\kappa}{2}\\
  &\aleq \delta^{-\frac\kappa2}[u]_{W^{s,\frac 1s}(B(R))}^{\frac 1s - \kappa}[w]_{W^{s,\frac 1s}(B(R))}^{\frac \kappa 2} \brac{\int_{B(R)}\frac{|u(x)-(u)_{B(R)}|^\frac 1s}{R^{\frac{2}{s\kappa}\brac{\frac{\kappa s^2 - \kappa s - 2s + 2s_0}{2s}
}}}\dx}^{\frac{s\kappa}{2}}\\
& \aleq \delta^{-\frac\kappa2}R^{\frac{\kappa}{2} - \frac{s_0 - s}{s}
}[u]_{W^{s,\frac 1s}(B(R))}^{\frac 1s - \frac \kappa2}[w]_{W^{s,\frac 1s}(B(R))}^{\frac \kappa 2}.
 \end{split}
\end{equation}
Here we need to take $s_0<s^2-s+1$ (close enough to $s$) to ensure $\frac{\kappa s^2 - \kappa s - 2s + 2s_0}{2s}<0$.

Applying \eqref{eq:old(3.29)} to the last two terms in \eqref{eq:old(3.28)} we obtain
\begin{equation}
 \begin{split}
  [u]_{W^{s_0,\frac{1}{s}}(B(r))}^{\frac{1}{s}}
&\aleq  \int_{B(\rho - 3\delta)}\int_{B(\rho - 3\delta)} \frac{|u(x)-u(y)|^{\frac{1}{s}-2} \brac{(u(x)-u(y))\, (\varphi(x)-\varphi(y))}}{|x-y|^{1+\frac{s_0}{s}}} \dx \dy\\
&\quad + \delta^{-\frac\kappa2}R^{\frac{\kappa}{2} - \frac{s_0 - s}{s}
}[u]_{W^{s,\frac 1s}(B(R))}^{\frac 1s}\brac{1+\brac{\frac{R}{\delta}}^{\frac{\kappa}{2}}}.
 \end{split}
\end{equation}

Now the final estimate of the proof on top of the page 39 in \cite{mazowiecka2020minimal} is exactly the same with the exception that the third term on the right-hand side after the first inequality sign needs to be replaced by
\[
 \delta^{-\frac\kappa2}R^{\frac{\kappa}{2} - \frac{s_0 - s}{s}
}[u]_{W^{s,\frac 1s}(B(R))}^{\frac 1s}\brac{1+\brac{\frac{R}{\delta}}^{\frac{\kappa}{2}}}.
\]
but we still have
\[
  \delta^{-\frac\kappa2}R^{\frac{\kappa}{2} - \frac{s_0 - s}{s}
}\brac{1+\brac{\frac{R}{\delta}}^{\frac{\kappa}{2}}} \le \delta^{-\frac{s_0-s}{s}}\brac{\frac{R}{\delta}}^\frac 1s.
\]
This finishes the proof.
\section{Proof of \texorpdfstring{\Cref{th:main}}{th:main}}\label{s:mainproof}
We are now ready to proceed with the proof of the main result.

\begin{proof}[Proof of \Cref{th:main}]
Assume the theorem is false and that there is no map of degree 1 in $W^{\frac 1p,p}(\S^1,\S^1)$ for which
\[
 \inf_{u\in W^{1/p,p}(\S^1,\S^1),\deg u =1}\left\{\int_{\S^1} \int_{\S^1} \frac{|u(x)-u(y)|^p}{|x-y|^{2}}  \dif x \dif y\right\}
\]
is achieved.

Let $\Gamma_1$ be the class of degree one maps from $\S^1$ to $\S^1$. 
Then by \Cref{la:main} we obtain the existence of two other homotopy classes, $\Gamma_{d_1}$ --- class of maps of degree $d_1$, where $0\neq d_1\in\Z$ and $\Gamma_{d_2}$ --- class of maps of degree $d_2$, where $0\neq d_2\in\Z$ such that
\[
 \Gamma_1 \subset \Gamma_{d_1} + \Gamma_{d_2} 
\]
and for any $\delta>0$ the following estimate holds
\begin{equation}\label{eq:suminpf}
\# \Gamma_{d_1} + \# \Gamma_{d_2} \le \# \Gamma_{1} + \delta.
\end{equation}
By \eqref{eq:gamma1} and \eqref{eq:gamma2} we know that $d_1, \, d_2 \notin\{-1,0,1\}$. Thus, 
\begin{equation}\label{eq:d1d2min}
 \min \brac{|d_1| + |d_2|} = 5.
\end{equation}

Recall from \cite[Point 7, p. 1090]{Mironescu}, that for any map $u\in W^{\frac12,2}(\S^1,\S^1)$ we have
\begin{equation}\label{eq:one}
 4\pi^2 |\deg u|\le [u]^2_{W^{\frac12,2}(\S^1,\S^1)}.
\end{equation}
Moreover, as in \cite[Proof of Theorem 2]{Mironescu} for any $1<p<2$ for any $u\in W^{\frac1p,p}(\S^1,\S^1)$ we have the inequality
\begin{equation}\label{eq:two}
\begin{split}
 [u]^2_{W^{\frac12,2}(\S^1,\S^1)} = \int_{\S^1} \int_{\S^1} \frac{|u(x) - u(y)|^p}{|x-y|^2}|u(x) - u(y)|^{2-p} \dif x \dif y\le 2^{2-p} [u]^p_{W^{\frac1p,p}(\S^1,\S^1)}.
\end{split}
 \end{equation}
Thus, combining \eqref{eq:one} and \eqref{eq:two} we get for any $u\in W^{\frac1p,p}(\S^1,\S^1)$
\[
 \frac{4\pi^2}{2^{2-p}}|\deg u|\le [u]^p_{W^{\frac 1p,p}(\S^1,\S^1)}.
\]
Therefore,
\begin{equation}\label{eq:diest}
 \frac{4\pi^2}{2^{2-p}}|d_i|\le \inf_{u_i\in W^{\frac 1p,p}(\S^1,\S^1), \deg u_i = d_i}[u_i]^p_{W^{\frac 1p,p}(\S^1,\S^1)} = \# \Gamma_{d_i}.
\end{equation}
Thus, using estimates \eqref{eq:d1d2min} and \eqref{eq:diest} we get
\begin{equation}\label{eq:LHSest}
 5\frac{4\pi^2}{2^{2-p}} \le \frac{4\pi^2}{2^{2-p}}(|d_1| + |d_2|)\le \# \Gamma_{d_1} + \# \Gamma_{d_2}.
\end{equation}
On the other hand, the map $\mathrm{Id}\colon \S^1\to \S^1$ is of degree one. Thus,
\begin{equation}\label{eq:RHSest}
\begin{split}
 \# \Gamma_1 \le E_{1/p,p}(\mathrm{Id},\S^1) = \int_{\S^1} \int_{\S^1} \frac{1}{|x-y|^{2-p}} \dif x \dif y.
%  
%  &= \int_{\R}\int_{\R} \frac{1}{|R-r|^{2-p}}\brac{\frac{2}{r^2+1}}^{\frac{p}{2}} \brac{\frac{2}{R^2+1}}^{\frac{p}{2}} \dif r \dif R.
\end{split}
\end{equation}
Combining \eqref{eq:suminpf} with \eqref{eq:LHSest} we get for any $\delta>0$
\[
 5\frac{4\pi^2}{2^{2-p}}\le \frac{4\pi^2}{2^{2-p}}(|d_1| + |d_2|) \le \# \Gamma_{d_1} + \#\Gamma_{d_2} \le \#\Gamma_1 +\delta \le E_{\frac1p,p}(\mathrm{Id},\S^1) + \delta.
\]
We have moreover (see Lemma \ref{decreasing_energy}) that for $1 < p_1 < p_2 < 2$ there is
\[
  E_{\frac{1}{p_1},p_1}(\mathrm{Id},\S^1) > E_{\frac{1}{p_2},p_2}(\mathrm{Id},\S^1).
\]
Taking such $p_1, p_2$ we get
\[
 5\frac{4\pi^2}{2^{2-p_1}} < 5\frac{4\pi^2}{2^{2-p_2}}
 \le
 E_{\frac{1}{p_2}, p_2}(\mathrm{Id},\S^1) + \delta < E_{\frac{1}{p_1}, p_1}(\mathrm{Id},\S^1) + \delta.
\]
We get therefore a contradiction for every $p_1$ such that
\[
 5\frac{4\pi^2}{2^{2-p_1}} \ge E_{\frac{1}{p_1}, p_1}(\mathrm{Id},\S^1).
\]
The smallest value $p'$ for which it holds is therefore such that
\[
 5\frac{4\pi^2}{2^{2-p'}} = E_{\frac{1}{p'}, p'}(\mathrm{Id},\S^1).
\]
This equality can be written equivalently (see lemma \ref{decreasing_energy}) as
\[
 5\frac{4\pi^2}{2^{2-p'}} = 2^{p'} \pi \int_{0}^{\pi} \left( \sin \gamma \right)^{p' - 2} \dif \gamma,
\]
which gives
\[
 5 \pi = \int_{0}^{\pi} \left( \sin \gamma \right)^{p' - 2} \dif \gamma.
\]
We can check numerically that $p' \approx 1.13924$. Therefore, for $p \in [p' ,2]$ there must be a minimizer of degree 1.
% \[
%  \begin{array}{|c|c|c|c|}
% \hline
%  &&&\\
%  p & 5\frac{4\pi^2}{2^{2-p}} & \text{integration over }(0,\infty)^2 & (0,\infty)\times(0,-\infty)\\
%   &&&\\
% \hline
% &&&\\
% 1+\frac{1}{10} & \sim 105.779 & \sim 64.486 &\\
% \hline
% &&&\\
% 1+\frac15 & & &\\
% &&&\\
% \hline
%  \end{array}
% \]
\end{proof}
\appendix
\section{Border value of $p$}\label{a:border_p}

\begin{lemma}\label{decreasing_energy}
For $1 < p_1 < p_2 < 2$ there is
\[
  E_{\frac{1}{p_1},p_1}(\mathrm{Id},\S^1) > E_{\frac{1}{p_2},p_2}(\mathrm{Id},\S^1).
\]
\end{lemma}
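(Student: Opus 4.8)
The plan is to compute $E_{1/p,p}(\mathrm{Id},\S^1)$ explicitly as a one–variable integral and then show that this integral is strictly decreasing in $p$ on $(1,2)$ via a convexity argument. For the computation, I would parametrize $\S^1$ by $\theta\mapsto e^{i\theta}$ (so that $\H^1$ corresponds to $\dif\theta$) and use $|\mathrm{Id}(e^{i\theta})-\mathrm{Id}(e^{i\phi})|=|e^{i\theta}-e^{i\phi}|=2\abs{\sin\tfrac{\theta-\phi}{2}}$, so that
\[
E_{1/p,p}(\mathrm{Id},\S^1)=\int_0^{2\pi}\!\!\int_0^{2\pi}\brac{2\abs{\sin\tfrac{\theta-\phi}{2}}}^{p-2}\dif\theta\,\dif\phi .
\]
Using $2\pi$–periodicity in the inner variable and substituting $\gamma=\tfrac{\theta-\phi}{2}$ yields $E_{1/p,p}(\mathrm{Id},\S^1)=4\pi\,\Phi(p)$ with
\[
\Phi(p)\coloneqq\int_0^{\pi}\brac{2\sin\gamma}^{p-2}\dif\gamma=2^{p-2}\int_0^\pi(\sin\gamma)^{p-2}\dif\gamma ,
\]
that is, $E_{1/p,p}(\mathrm{Id},\S^1)=2^p\pi\int_0^\pi(\sin\gamma)^{p-2}\dif\gamma$, which is the expression invoked in the proof of \Cref{th:main}. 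The integral is finite for every $p>1$ since $p-2>-1$, so it suffices to prove that $\Phi$ is strictly decreasing on $(1,2)$.

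Next I would differentiate under the integral sign, obtaining $\Phi'(p)=\int_0^\pi(2\sin\gamma)^{p-2}\log(2\sin\gamma)\,\dif\gamma$ and $\Phi''(p)=\int_0^\pi(2\sin\gamma)^{p-2}\brac{\log(2\sin\gamma)}^2\dif\gamma>0$. Differentiation is legitimate by dominated convergence: on a compact subinterval $[a,b]\subset(1,\infty)$ one has $(2\sin\gamma)^{p-2}\le(2\sin\gamma)^{a-2}+(2\sin\gamma)^{b-2}$, and multiplying by $\abs{\log(2\sin\gamma)}^k$ ($k=1,2$) still produces an $L^1(0,\pi)$ function, since the integrable power singularities of $(2\sin\gamma)^{a-2},(2\sin\gamma)^{b-2}$ at $\gamma\in\{0,\pi\}$ absorb the logarithmic factors. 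Hence $\Phi''>0$, so $\Phi'$ is strictly increasing. At $p=2$ the weight $(2\sin\gamma)^{p-2}$ equals $1$, so $\Phi'(2)=\int_0^\pi\log(2\sin\gamma)\,\dif\gamma$, which vanishes by the classical identity $\int_0^{\pi/2}\log\sin\gamma\,\dif\gamma=-\tfrac\pi2\log2$ (whence $\int_0^\pi\log\sin\gamma\,\dif\gamma=-\pi\log2=-\int_0^\pi\log2\,\dif\gamma$). Since $\Phi'$ is strictly increasing with $\Phi'(2)=0$, we get $\Phi'(p)<0$ for all $p\in(1,2)$, so $\Phi$, and therefore $E_{1/p,p}(\mathrm{Id},\S^1)=4\pi\Phi(p)$, is strictly decreasing on $(1,2)$; this gives $E_{1/p_1,p_1}(\mathrm{Id},\S^1)>E_{1/p_2,p_2}(\mathrm{Id},\S^1)$ whenever $1<p_1<p_2<2$.

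The only mildly delicate point is the justification of differentiation under the integral near $\gamma=0,\pi$, where $(2\sin\gamma)^{p-2}$ is unbounded — dealt with by the domination above — together with recalling the log–sine identity. If one prefers to avoid the convexity route, the last step can be replaced by writing $\Phi'(p)=\int_0^\pi v\,e^{(p-2)v}\,\dif\gamma$ with $v=\log(2\sin\gamma)$, observing that $\int_0^\pi v\,\dif\gamma=0$, and applying Chebyshev's integral inequality to the increasing function $v\mapsto v$ and the strictly decreasing (for $p<2$) function $v\mapsto e^{(p-2)v}$, which yields $\Phi'(p)<0$ directly.
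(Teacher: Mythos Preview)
Your proof is correct and follows a different, somewhat more elementary route than the paper's. Both arguments first reduce $E_{1/p,p}(\mathrm{Id},\S^1)$ to the one-variable integral $2^{p}\pi\int_0^{\pi}(\sin\gamma)^{p-2}\dif\gamma$; from there the paper rewrites this as $2^{p}\pi\,B\!\brac{\tfrac{p-1}{2},\tfrac12}$, differentiates via the digamma function, and reduces the sign question to the inequality $\sum_{n\ge0}\frac{1}{(2n+p-1)(2n+p)}>\log 2$, which is checked by termwise monotonicity in $p$ together with the endpoint identity $\sum_{n\ge0}\frac{1}{(2n+1)(2n+2)}=\log 2$. You instead differentiate $\Phi(p)=\int_0^{\pi}(2\sin\gamma)^{p-2}\dif\gamma$ directly, observe $\Phi''>0$, and use Euler's log-sine identity $\int_0^{\pi}\log(2\sin\gamma)\,\dif\gamma=0$ to get $\Phi'(2)=0$, whence $\Phi'<0$ on $(1,2)$ by strict convexity. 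The two endpoint identities are essentially the same classical fact in different guises, so both proofs ultimately hinge on the same computation at $p=2$; your convexity argument is just a cleaner way to propagate it to $p<2$, avoiding special-function machinery, while the paper's route has the side benefit of yielding a closed Beta-function formula for the energy.
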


\begin{proof}
It suffices to show that
\[
\begin{split}
     \frac{\partial}{\partial p} E_{\frac{1}{p}, p}(\mathrm{Id},\S^1)
\end{split}
\]
is negative on the interval $(1,2)$. Let $p \in (1,2)$. Using polar coordinates, we obtain
\begin{align*}
     E_{\frac{1}{p}, p}(\mathrm{Id},\S^1)
     &= \int_{\S^1} \int_{\S^1} |x-y|^{p - 2} \dx \dy \\
     &= \int_0^{2\pi} \int_0^{2\pi} \brac{(\cos \alpha - \cos \beta)^2 + (\sin \alpha -\sin \beta)^2}^{\frac{p-2}{2}} \dif \alpha \dif\beta\\
     &= \int_0^{2\pi} \int_0^{2\pi}\brac{4\sin^2\brac{\frac\alpha2 - \frac\beta2}}^{\frac{p-2}{2}} \dif \alpha \dif \beta\\
     &= \int_0^{2\pi} \int_0^{2\pi} \abs{2 \sin\brac{\frac\alpha2 - \frac\beta2}}^{p - 2} \dif \alpha \dif \beta \\
     &= \int_0^{2\pi} \int_{-\beta}^{2\pi-\beta} \abs{2 \sin\brac{\frac\gamma2 }}^{p - 2} \dif \gamma \dif \beta, \\
     & = 2 \int_0^{2\pi} \int_{-\beta/2}^{\pi-\beta/2} \abs{2 \sin \gamma}^{p - 2} \dif \gamma \dif \beta.
\end{align*}
Moreover,
\begin{align*}
     \int_{-\beta/2}^{\pi-\beta/2} \abs{2 \sin \gamma}^{p - 2} \dif \gamma
     =
     \int_{0}^{\pi} \abs{2 \sin \gamma}^{p - 2} \dif \gamma
     = 2^{p-1} \int_{0}^{\frac{\pi}{2}} \left( \sin \gamma \right)^{p - 2} \dif \gamma.
\end{align*}
Thus,
\begin{align*}
     E_{\frac{1}{p}, p}(\mathrm{Id},\S^1)
     = 2^{p+1} \pi \int_{0}^{\frac{\pi}{2}} \left( \sin \gamma \right)^{p - 2} \dif \gamma.
\end{align*}
Putting $w = \sin^2 \gamma$, we get
\begin{align*}
     E_{\frac{1}{p}, p}(\mathrm{Id},\S^1)
     = 2^{p+1} \pi \int_{0}^{1} w^{\frac{p - 2}{2}} \frac{\dif w}{2 \sqrt{w} \sqrt{1-w}}
     &= 2^{p} \pi \cdot B\left( \frac{p-1}{2}, \frac{1}{2} \right),
\end{align*}
where $B$ is the Euler beta function. Therefore,
\begin{align*}
     \frac{\partial}{\partial p} E_{\frac{1}{p}, p}(\mathrm{Id},\S^1)
     & = 2^{p} \log(2) \pi \cdot B\left( \frac{p-1}{2}, \frac{1}{2} \right)
     + 2^{p} \pi \cdot \frac{\partial}{\partial p} \left( B\left( \frac{p-1}{2}, \frac{1}{2} \right) \right).
\end{align*}
It is well-known that
\begin{align*}
     \frac{\partial}{\partial z_1} \left( B\left( z_1, z_2 \right) \right)
     =
     B\left( z_1, z_2 \right) \left( \phi(z_1) - \phi(z_1 + z_2) \right),
\end{align*}
where $\phi(z)$ is the digamma function. Hence, we obtain
\begin{align*}
     \frac{\partial}{\partial p} E_{\frac{1}{p}, p}(\mathrm{Id},\S^1)
     & = 2^{p-1} \pi \cdot B\left( \frac{p-1}{2}, \frac{1}{2} \right) \cdot \left( 2 \log 2 + \phi\left( \frac{p-1}{2} \right) - \phi\left( \frac{p}{2} \right) \right).
\end{align*}
As the Euler beta function is positive for real positive arguments, the derivative of the energy is negative if and only if
\begin{align*}
     2 \log 2 + \phi\left( \frac{p-1}{2} \right) - \phi\left( \frac{p}{2} \right) < 0.
\end{align*}
Now, we use the series expansion of the $\phi$ function, which can be found in \cite[p. 259]{abramowitz}. We get that for $z \notin \mathbb{N}$, there is
\begin{align*}
     \phi\left( z \right) = - \gamma + \sum_{n=0}^{\infty} \frac{z-1}{(n+1)(n+z)},
\end{align*}
where $\gamma$ is the Euler-Mascheroni constant, and the series converges for all $z > 0, z \notin \mathbb{N}$. As $\frac{p-1}{2}, \frac{p}{2} \notin \mathbb{N}$, we obtain
\begin{align*}
     \phi\left( \frac{p-1}{2} \right) - \phi\left( \frac{p}{2} \right)
     & = \sum_{n=0}^{\infty} \frac{\frac{p-1}{2}-1}{(n+1)\left(n+\frac{p-1}{2} \right)} - \sum_{n=0}^{\infty} \frac{\frac{p}{2}-1}{(n+1) \left(n+\frac{p}{2} \right)} \\
     & = \sum_{n=0}^{\infty} \frac{1}{n+1} \cdot \frac{(2n+p)(p-3) - (2n+p-1)(p-2)}{(2n+p-1)(2n+p)} \\
     & = - 2 \sum_{n=0}^{\infty} \frac{1}{(2n+p-1)(2n+p)}.
\end{align*}
Hence, the derivative of the energy is negative if and only if
\begin{align*}
     \log 2 < \sum_{n=0}^{\infty} \frac{1}{(2n+p-1)(2n+p)}.
\end{align*}
Now, as every summand $\frac{1}{(2n+p-1)(2n+p)}$ is a decreasing function of $p$ for $p \in (1,2)$, so is the whole series. Therefore, it suffices to show that the weak version of the inequality holds for $p=2$. However, we obtain exactly
\begin{align*}
     \sum_{n=0}^{\infty} \frac{1}{(2n+1)(2n+2)} = \log(2).
\end{align*}
This finishes the proof of the lemma.
\end{proof}

\bibliographystyle{abbrv}%
\bibliography{bib1}%

\end{document}